\newtheorem{thm}{Theorem}[section]
 \newtheorem{cor}[thm]{Corollary}
 \newtheorem{lem}[thm]{Lemma}
 \newtheorem{prop}[thm]{Proposition}
\numberwithin{equation}{section}
 \theoremstyle{definition}
  \newtheorem{defn}[thm]{Definition}
  \newtheorem{question}[thm]{Question}
 \theoremstyle{remark}
 \newtheorem{rem}[thm]{Remark}
  \newtheorem{ex}[thm]{Example}
\newtheorem*{claim*}{Claim}
\def\NN{\mathbb{N}}
\def\CC{\mathbb{C}}
\def\Nd{\mathcal{N}}
\def\supp{\mathrm{supp}}
\def\diam{\mathrm{diam}}
\begin{document}

\title{Characterisations for uniform amenability}

\author{Jingming Zhu and Jiawen Zhang}

\address[J. Zhu]{College of Data Science, Jiaxing University, Jiaxing , 314001, P.R.China.}
\email{jingmingzhu@zjxu.edu.cn}

\address[J. Zhang]{School of Mathematical Sciences, Fudan University, 220 Handan Road, Shanghai, 200433, China.}
\email{jiawenzhang@fudan.edu.cn}

\date{}

\thanks{The first author is supported by National Natural Science Foundation of China under Grant No.12071183. The second author is supported by National Natural Science Foundation of China under Grant No.11871342.}

\begin{abstract}
In this paper, we provide several characterisations for uniform amenability concerning a family of finitely generated groups. More precisely, we show that the Hulanicki-Reiter condition for uniform amenability can be weakened in several directions, including cardinalities of supports and certain operator norms.
\end{abstract}

\date{\today}
\maketitle

\parskip 4pt


\textit{Keywords: (Uniform) amenability, (Uniform) Property A, Hulanicki-Reiter condition.}

\section{Introduction}\label{sec:intro}

The notion of amenability for groups was introduced by von Neumann \cite{vN29} as the obstruction to the Banach–Tarski paradox \cite{BT24}. Amenability admits a large number of characterisations, and is now fundamental in many areas of mathematics. There are a lot of excellent references on amenability and we guide interested readers to, \emph{e.g.}, \cite{Pat88, Run02, Wag85}.


On the other hand, Yu introduced the notion of Property A for discrete metric spaces \cite{Yu00} to attack the coarse Baum-Connes conjecture, and hence the Novikov conjecture in the case of groups. Property A can be regarded as a non-equivariant version of amenability, and has gained a lot of attractions due to its close relations with operator algebras and other areas of mathematics (see, \emph{e.g.}, \cite{Wil09}).

In this paper, we mainly focus on the case of a family of groups and study a uniform version of amenability and Property A for the family. Roughly speaking, a family of groups is uniformly amenable if the quantifiers in the definition of amenability can be made uniform. More precisely, we consider the following:

\begin{defn}\label{defn: uniformly amenable}
Let $\{G_i\}_{i\in I}$ be a family of finitely generated groups with finite generating set $S_i$ for $i\in I$. We say that $\{(G_i,S_i)\}_{i\in I}$ is \emph{uniformly amenable} if for any $\varepsilon>0$ there exists $D>0$ and a family of functions $\{f_i: G_i \to [0,1]\}_{i\in I}$ such that $\sum_{\gamma\in G_i} |f_i(\gamma)|^2 =1$, $\sum_{\gamma \in G_i} |f_i(\gamma) - f_i(\gamma s)|^2 \leq \varepsilon^2$ for any $s\in S_i$ and $\supp(f_i) \subseteq B(1_{G_i}, D)$.
\end{defn}

The main contribution of this paper is to provide several characterisations for uniform amenability. To be more precise, we first consider the general case and obtain two conditions equivalent to uniform amenability (see Theorem \ref{thm:main result general case}), which indicate that the uniform boundedness on the diameter of $\supp(f_i)$ in Definition \ref{defn: uniformly amenable} can be relaxed to the uniform boundedness on their cardinalities, or even to that on the $\ell^1$-norm of $f_i$. Furthermore, we realise that when the group $G_i$ is amenable, the square of the $\ell^1$-norm of $f_i$ coincides with the operator norm of certain linear bounded operator on $\ell^2(G_i)$ associated to the function $f_i$ (see Proposition \ref{prop:op norm and l1 norm}). Therefore, in this special case we obtain two extra characterisations for uniform amenability, one of which also relates to uniform Property A (see Theorem \ref{thm:main result amenability}). Applying our main results to a sequence of finite groups, we obtain characterisations for Property A on their coarse disjoint union (see Corollary \ref{cor:main result amenability}).

The paper is organised as follows: After recalling the background notions and notation in Section \ref{sec:pre}, we state and prove our main results in Section \ref{sec:main results}. Finally, we give some couterexamples and open questions in Section \ref{sec:ques}.

\subsection*{Acknowledgements}
The first-named author would like to thank Prof. Bolin Ma for helpful discussions. The second-named author would like to thank Prof. Yijun Yao for helpful discussions. We would also like to thank the anonymous referee for pointing out a gap in the previous version.

%

\section{Preliminaries}\label{sec:pre}

Here we collect some necessary notions for this paper. Readers are guided to \cite{NY12,Wil09} for general background and more details.

\subsection{Notions from coarse geometry}
Let us start with some basic notions.

\begin{defn}\label{defn: basic notions from coarse geometry}
Let $(X, d)$ be a discrete metric space.
\begin{enumerate}
 \item For $x\in X$ and $R>0$, denote the closed ball $B(x,R):=\{y\in X: d(x,y) \leq R\}$.
 \item We say that $(X,d)$ has \emph{bounded geometry} if $\sup_{x\in X}\sharp B(x, R)$ is finite for any $R>0$. Here we use the notation $\sharp A$ to denote the cardinality of the subset $A \subseteq X$.
 \item For a subset $A \subseteq X$ and $R>0$, denote $\Nd_R (A):=\{x\in X:d(x,A)\leq R\}$ the $R$-neighborhood of $A$ in $X$. Also denote $\partial_R A:=\{x\in X\setminus A : d(x,A)\leq R\}$ the $R$-boundary of $A$ in $X$.
 \item For $R>0$, we say that a subset $A \subseteq X$ is \emph{$R$-connected} if for any $x,y\in A$ there exists a sequence $x_0=x, x_1, x_2, \cdots, x_n=y$ in $A$ such that $d(x_i,x_{i+1}) \leq R$ for $i=0,1,\cdots, n-1$.
 \item For a subset $A \subseteq X$, we denote $\chi_A$ the characteristic function of $A$. For $x\in X$, also denote $\delta_x:=\chi_{\{x\}}$.
\end{enumerate}
\end{defn}

\begin{defn}\label{defn: coarse equivalence}
Let $(X,d_X)$ and $(Y,d_Y)$ be metric spaces and $f: X \to Y$ be a map. We say that $f$ is a \emph{coarse embedding} if there exist functions $\rho_{\pm}: [0,\infty) \to [0,\infty)$ with $\lim_{t\to +\infty}\rho_{\pm}(t) = +\infty$ such that for any $x,y\in X$ we have
\[
\rho_-(d_X(x,y)) \leq d_Y(f(x),f(y)) \leq \rho_+(d_X(x,y)).
\]
If additionally there exists $C>0$ such that $Y=\Nd_C(f(X))$, then we say that $f$ is a \emph{coarse equivalence} and $(X,d_X), (Y,d_Y)$ are \emph{coarsely equivalent}.
\end{defn}

Discrete groups provide a large class of interesting examples of metric spaces with bounded geometry (see, \emph{e.g.}, \cite[Chapter 1]{NY12}). For a finitely generated group $G$ with a finite generating set $S$, there is the \emph{word length function $\ell_S$} defined by
\[
\ell_S(g) = \min\{k: g=s_{i_1}^{\pm}s_{i_2}^{\pm}\cdots s_{i_k}^{\pm}\mbox{~where~} s_{i_j} \in S \mbox{~for~}1 \leq j \leq k\}.
\]
Note that $\ell_S$ is a proper function on $G$ in the sense that $\sharp \{g: \ell_S(g) \leq R\}$ is finite for all $R\geq 0$. This leads to a discrete left-invariant metric $d_S$ on $G$ by $d_S(g,h):=\ell_S(g^{-1}h)$, which is clearly of bounded geometry. We call $d_S$ the \emph{word length metric} on $G$ (associated to $S$). Gromov \cite{Gro93} observed that different finite generating sets $S$ give coarsely equivalent word length metrics on $G$ (see also \cite[Theorem 1.3.12]{NY12}).

%


Given a sequence of finite metric spaces, we can build a single metric space encoding the information of coarse geometry of the sequence in a uniform way:

\begin{defn}\label{defn: coarse disjoint union}
Let $\{(X_n,d_n)\}_{n\in \NN}$ be a sequence of finite metric spaces. \emph{A coarse disjoint union} of $\{X_n\}_n$ is a metric space $(X,d)$ such that $X=\bigsqcup_n X_n$ as a set and the metric $d$ coincides with $d_n$ on each $X_n$ and satisfies:
\[
d(X_n, X_m) \geq n+m+\diam(X_n) + \diam(X_m) \quad \mbox{for~each~}n \neq m \mbox{~in~} \NN.
\]
\end{defn}

Note that the metric $d$ in Definition \ref{defn: coarse disjoint union} is \emph{not} uniquely determined, however, it is easy to see that they are mutually coarsely equivalent to each other.

Also recall the following notion:
\begin{defn}\label{defn: variation}
Let $(X,d_X)$ and $(Y,d_Y)$ be metric spaces and $f: X \to Y$ be a map. Given $\varepsilon, R>0$, we say that the map $f$ has \emph{$(\varepsilon,R)$-variation} if for any $x,y\in X$ with $d_X(x,y) \leq R$, then $d_Y(f(x),f(y)) \leq \varepsilon$.
\end{defn}

\subsection{Amenability}\label{ssec:amenability}


The notion of amenability was originally introduced by von Neumann \cite{vN29} as the obstruction to the Banach-Tarski paradox. Here we use the following equivalent definition due to F{\o}lner \cite{Fol55, Fol57}:

\begin{defn}\label{defn: amenability}
Let $G$ be a finitely generated group equipped with a word length metric. We say that $G$ is \emph{amenable} if for any $R>0$ and $\varepsilon>0$, there exists a finite subset $F\subseteq G$ such that
\[
\frac{\sharp \partial_R F}{\sharp F}\leq \varepsilon.
\]
\end{defn}

We also need an analytic characterisation for amenability. For a set $X$ and $p\in[1,\infty)$, we denote
\[
\ell^p(X):=\big\{f: X \to \CC ~\big|~ \sum_{x\in X}|f(x)|^p < \infty\big\}.
\]
For $f\in \ell^p(X)$, we endow the $\ell^p$-norm by $\|f\|_p:=(\sum_{x\in X}|f(x)|^p)^{1/p}$. Also denote
\[
\ell^p(X)_{1,+}=\big\{f\in l^p(X): \|f\|_p=1\text{ and }f(x)\geq 0 \text{ for any }x\in X\big\}.
\]
Given a group $G$, a function $f: G \to \CC$ and $\gamma \in G$, denote the \emph{right $\gamma$-translation} of $f$ by $\gamma \cdot f$. In other words, $(\gamma \cdot f)(h)=f(h\gamma)$ for any $h\in G$. We also need to consider the \emph{left $\gamma$-translation} of $f$ defined by $(\gamma \star f)(h):=f(\gamma^{-1}h )$ for $h\in G$. 

%

The following result was essentially due to Hulanicki and Reiter\footnote{They originally consider the $\ell^1$-case, while this is easily seen to be equivalent to the $\ell^2$-case shown in Proposition \ref{prop: Hul-Rei condition} using the Mazur maps (see, \emph{e.g.}, \cite[Definition (Theorem) 6.1.1]{Wil09}).} \cite{Hul66, Rei65}: 

\begin{prop}[Hulanicki-Reiter condition]\label{prop: Hul-Rei condition}
Let $G$ be a finitely generated group with finite generating set $S$ and the associated word length metric. Then the following are equivalent:
\begin{enumerate}
 \item $G$ is amenable;
  \item For any $\varepsilon>0$, there exists a function $f\in \ell^2(G)_{1,+}$ such that $\|f-s\cdot f\|_2\leq \varepsilon$ for any $s\in S$ and $\supp(f)$ is finite.
\end{enumerate}
\end{prop}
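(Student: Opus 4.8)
The plan is to prove both implications directly from the F\o{}lner characterisation in Definition \ref{defn: amenability}, using a co-area (layer-cake) decomposition of the test function into its superlevel sets. I expect $(2)\Rightarrow(1)$ to be the substantial direction; $(1)\Rightarrow(2)$ is routine.

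For $(1)\Rightarrow(2)$, given $\varepsilon>0$ I would apply amenability with $R=1$ to get a finite $F\subseteq G$ with $\sharp\partial_1 F/\sharp F\leq \varepsilon^2/2$ and set $f:=(\sharp F)^{-1/2}\chi_F$, which lies in $\ell^2(G)_{1,+}$ and has finite support. Since $(s\cdot f)(h)=f(hs)=(\sharp F)^{-1/2}\chi_{Fs^{-1}}(h)$, one gets
\[
\|f-s\cdot f\|_2^2=\frac{\sharp(F\triangle Fs^{-1})}{\sharp F}.
\]
Because $d(x,xs)=\ell_S(s)=1$, the set $Fs^{-1}\setminus F$ lies in $\partial_1 F$ and the injection $x\mapsto xs$ sends $F\setminus Fs^{-1}$ into $\partial_1 F$, whence $\sharp(F\triangle Fs^{-1})\leq 2\sharp\partial_1 F$ and the bound $\|f-s\cdot f\|_2\leq\varepsilon$ follows.

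For $(2)\Rightarrow(1)$, I would first upgrade almost-invariance under $S$ to almost-invariance under all of $G$: since right translation is an $\ell^2$-isometry and $(gh)\cdot f=g\cdot(h\cdot f)$, a telescoping estimate along a geodesic word gives $\|f-g\cdot f\|_2\leq \ell_S(g)\,\delta$ with $\delta:=\max_{s\in S}\|f-s\cdot f\|_2$. To feed this into the co-area formula, which is cleanest in $\ell^1$, I pass to $h:=f^2\in\ell^1(G)_{1,+}$; by Cauchy-Schwarz $\|h-g\cdot h\|_1\leq \|f-g\cdot f\|_2\,\|f+g\cdot f\|_2\leq 2\ell_S(g)\,\delta$ (this is the $\ell^2$-to-$\ell^1$ passage alluded to in the footnote). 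Writing the superlevel sets $F_t:=\{x\in G: h(x)>t\}$, which are finite since $\supp f$ is, and expressing indicators as integrals in $t$ yields $\int_0^\infty \sharp F_t\,dt=\|h\|_1=1$ and $\int_0^\infty \sharp(F_t\triangle F_t g^{-1})\,dt=\|h-g\cdot h\|_1$.

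The heart of the matter, and the main obstacle, is extracting a good F\o{}lner set for an \emph{arbitrary} radius $R$, since the generators $S$ only control the $1$-boundary directly. Fixing $R>0$, I would use the inclusion $\partial_R F_t\subseteq\bigcup_{g\in B(1_G,R)\setminus\{1_G\}}(F_t g^{-1}\setminus F_t)$ together with the estimates above to obtain
\[
\int_0^\infty \sharp\partial_R F_t\,dt\leq\sum_{g\in B(1_G,R)\setminus\{1_G\}}\|h-g\cdot h\|_1\leq 2R\,\sharp B(1_G,R)\cdot\delta=:C_R\,\delta,
\]
where $C_R<\infty$ by bounded geometry. Comparing this with $\int_0^\infty \sharp F_t\,dt=1$ through a pigeonhole/averaging argument forces the existence of some $t_0$ with $F_{t_0}\neq\emptyset$ and $\sharp\partial_R F_{t_0}\leq C_R\delta\,\sharp F_{t_0}$; choosing $f$ at the outset so that $\delta\leq\varepsilon/C_R$ then yields the required F\o{}lner set. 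The technical care lies in correctly handling the general radius $R$ and in the bounded-geometry bound on $\sharp B(1_G,R)$, which is exactly what makes the passage from the single-generator estimate to the full F\o{}lner condition go through.
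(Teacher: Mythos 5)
Your proposal is correct and matches, in essence, the proof the paper itself defers to: the paper offers no argument for this proposition, instead citing Hulanicki, Reiter and \cite[Theorem 3.2.2]{NY12}, and your two directions are exactly the standard argument behind those citations. In particular, normalised characteristic functions of F{\o}lner sets handle (1) $\Rightarrow$ (2), while your telescoping estimate, the squaring step passing from $\ell^2$ to $\ell^1$ (precisely the Mazur-map reduction mentioned in the paper's footnote), and the layer-cake/level-set pigeonhole handle (2) $\Rightarrow$ (1), just as in the cited sources.
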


We remark that since word length metrics on groups are indeed graph metrics on their Cayley graphs, condition (2) above is also equivalent to the following: for any $\varepsilon>0$ and $R>0$, there exists a function $f\in \ell^2(G)_{1,+}$ such that $\|f-\gamma\cdot f\|_2\leq \varepsilon$ for any $\gamma \in G$ with $\ell_S(\gamma) \leq R$ and $\supp(f)$ is finite. This form resembles the original Hulanicki-Reiter condition for the $\ell^1$-case.

As mentioned in Section \ref{sec:intro}, our main focus will be the notion of uniform amenability for a family of groups  (see Definition \ref{defn: uniformly amenable}). Roughly speaking, it requires that condition (2) in Proposition \ref{prop: Hul-Rei condition} holds uniformly for the given family. In fact, this is also equivalent to a uniform version of the F{\o}lner condition (Definition \ref{defn: amenability}):

\begin{prop}\label{prop: uniform Hul-Rei condition}
Let $\{G_i\}_{i\in I}$ be a family of finitely generated groups with finite generating set $S_i$ such that $\sup_i \sharp S_i<\infty$ and the associated word length metric. Then the following are equivalent:
\begin{enumerate}
 \item $\{(G_i, S_i)\}_{i\in I}$ is uniformly amenable, \emph{i.e.}, for any $\varepsilon>0$ there exists $D>0$ and a family of functions $\{f_i\}_{i\in I}$ such that each $f_i \in \ell^2(G_i)_{1,+}$ satisfies $\|f_i-s\cdot f_i\|_2\leq \varepsilon$ for $s\in S_i$ and $\supp(f_i) \subseteq B(1_{G_i}, D)$.
 \item For any $R>0$ and $\varepsilon>0$, there exists $D'>0$ and a family of finite subsets $\{F_i\subseteq G_i\}_{i\in I}$ such that $\sharp \partial_R F_i/\sharp F_i \leq \varepsilon$ and $F_i \subseteq B(1_{G_i}, D')$.
\end{enumerate}
\end{prop}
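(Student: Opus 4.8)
The plan is to prove the two implications separately; in both directions the support condition transfers essentially verbatim, so one may take the two radii equal, and the hypothesis $M:=\sup_i\sharp S_i<\infty$ enters only through a uniform count of the balls $B(1_{G_i},R)$, namely $\sharp\{\gamma\in G_i:\ell_{S_i}(\gamma)\le R\}\le(2M+1)^R=:N(R)$, a bound independent of $i$.

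For $(2)\Rightarrow(1)$, since generators sit at distance $1$ it suffices to control $\|f_i-s\cdot f_i\|_2$ for $s\in S_i$. Given $\varepsilon>0$, I apply $(2)$ with $R=1$ and F{\o}lner ratio $\delta$ to be chosen, obtaining finite sets $F_i\subseteq B(1_{G_i},D')$ with $\sharp\partial_1 F_i/\sharp F_i\le\delta$, and I set $f_i:=\chi_{F_i}/\sqrt{\sharp F_i}\in\ell^2(G_i)_{1,+}$, whose support is exactly $F_i\subseteq B(1_{G_i},D')$. A direct computation gives $\|f_i-s\cdot f_i\|_2^2=\sharp(F_i\triangle F_is^{-1})/\sharp F_i$, and the observation that $h\mapsto hs$ embeds $F_i\setminus F_is^{-1}$ into $\partial_1 F_i$ (and symmetrically for $s^{-1}$) yields $\sharp(F_i\triangle F_is^{-1})\le 2\sharp\partial_1 F_i$. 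Hence $\|f_i-s\cdot f_i\|_2^2\le 2\delta$, and taking $\delta:=\varepsilon^2/2$ settles this direction with $D:=D'$.

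For the substantive implication $(1)\Rightarrow(2)$, I first linearise by passing to $g_i:=f_i^2\in\ell^1(G_i)_{1,+}$. Using $|a^2-b^2|=(a+b)|a-b|$ and Cauchy--Schwarz, together with the unitarity of right translation on $\ell^2$, gives $\|g_i-\gamma\cdot g_i\|_1\le 2\|f_i-\gamma\cdot f_i\|_2$; and telescoping along a geodesic word for $\gamma$ (again using that translation is an $\ell^2$-isometry, so $\|f_i-s^{-1}\cdot f_i\|_2=\|f_i-s\cdot f_i\|_2$) gives $\|f_i-\gamma\cdot f_i\|_2\le \ell_{S_i}(\gamma)\,\varepsilon_0$ once $(1)$ is invoked with generator-level bound $\varepsilon_0$. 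The heart of the matter is a coarea (layer-cake) argument on the superlevel sets $F_i^t:=\{h:g_i(h)>t\}$: writing $g_i(h)=\int_0^\infty\chi_{F_i^t}(h)\,dt$, one checks the \emph{identity} $\int_0^\infty\sharp(F_i^t\triangle F_i^t\gamma^{-1})\,dt=\|g_i-\gamma\cdot g_i\|_1$ together with the normalisation $\int_0^\infty\sharp F_i^t\,dt=\|g_i\|_1=1$.

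Finally, the elementary inclusion $\partial_R F_i^t\subseteq\bigcup_{\ell_{S_i}(\gamma)\le R}(F_i^t\gamma^{-1}\setminus F_i^t)$ gives $\sharp\partial_R F_i^t\le\sum_{\ell_{S_i}(\gamma)\le R}\sharp(F_i^t\triangle F_i^t\gamma^{-1})$; integrating in $t$ and inserting the two relations above together with the uniform count $N(R)$ yields $\int_0^\infty\sharp\partial_R F_i^t\,dt\le 2R\,N(R)\,\varepsilon_0$ while $\int_0^\infty\sharp F_i^t\,dt=1$. An averaging (pigeonhole) over $t$ then produces a level $t_0$ with $\sharp\partial_R F_i^{t_0}/\sharp F_i^{t_0}\le 2R\,N(R)\,\varepsilon_0$, since otherwise $\sharp\partial_R F_i^t>2R\,N(R)\,\varepsilon_0\,\sharp F_i^t$ for a.e.\ $t$ would force $\int_0^\infty\sharp\partial_R F_i^t\,dt>2R\,N(R)\,\varepsilon_0$, a contradiction; moreover $F_i^{t_0}\subseteq\supp(g_i)=\supp(f_i)\subseteq B(1_{G_i},D)$. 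Choosing $\varepsilon_0:=\varepsilon/(2R\,N(R))$ at the outset gives $(2)$ with $D':=D$. I expect the main obstacle to lie in this last step: one must ensure that the passage $\varepsilon\mapsto\varepsilon_0$ and the constant $N(R)$ are genuinely independent of $i$---which is precisely where $\sup_i\sharp S_i<\infty$ is indispensable---and one must verify the coarea relation as an equality rather than a mere inequality, since it is this exactness that makes the resulting boundary estimate tight enough to close the argument.
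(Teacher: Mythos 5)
Your proof is correct and follows essentially the same route the paper intends: the paper omits the argument, citing the standard proof of the Hulanicki--Reiter equivalence (\cite[Theorem 3.2.2]{NY12}), which is exactly what you carry out --- normalized characteristic functions $\chi_{F_i}/\sqrt{\sharp F_i}$ for $(2)\Rightarrow(1)$, and Namioka's layer-cake trick on the superlevel sets of $f_i^2$ for $(1)\Rightarrow(2)$ --- while correctly tracking that the support radius transfers verbatim and that $\sup_i \sharp S_i<\infty$ makes the ball count $N(R)$, and hence the choice $\varepsilon_0=\varepsilon/(2RN(R))$, independent of $i$.
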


The proof follows directly from that of Proposition \ref{prop: Hul-Rei condition} (see, \emph{e.g.}, \cite[Theorem 3.2.2]{NY12}) while noticing that the quantifiers can be uniformly controlled thanks to the assumption of $\sup_i {\sharp S_i}<\infty$, hence we omit the details. It is worth noticing that the condition $\sup_i {\sharp S_i}<\infty$ is also equivalent to that the coarse disjoint union $\bigsqcup G_i$ has bounded geometry.


We also remark that changing the generating sets $S_i$ might change the coarse geometry of the family $\{G_i\}_{i\in I}$, since the control function $\rho_{\pm}$ in Definition \ref{defn: coarse equivalence} might alter according to $i \in I$. Hence when talking about uniform amenability, we have to fix a generating set for each group in the family.

\subsection{Property A}
The notion of Property A was introduced by Yu in his celebrated work \cite{Yu00} to attack the coarse Baum-Connes conjecture. Intuitively speaking, Property A is a non-equivariant version of amenability, hence these two notions enjoy analogous descriptions.

\begin{defn}\label{defn: Property A}
A discrete metric space $(X,d)$ is said to have property A if for any $\varepsilon, R>0$ there exists a collection of finite subsets $\{A_x\}_{x\in X}$ in $X\times\NN$ and a constant $S>0$ such that
\begin{enumerate}
  \item $\frac{\sharp(A_x\Delta A_y)}{\sharp(A_x\cap A_y)}\leq \varepsilon$ for $x,y\in X$ with $d(x,y)\leq R$;\\[0.2pt]
  \item $A_x\subset B(x,S)\times \mathbb{N}$ for any $x\in X$.
\end{enumerate}
\end{defn}

Analogous to the case of amenability, we also need the following analytic characterisation for Property A due to Higson and Roe\footnote{Again note that the $\ell^2$-case shown in Proposition \ref{prop: Hig-Roe condition} is equivalent to the $\ell^1$-case using the Mazur maps (see, \emph{e.g.}, \cite[Theorem 1.2.4]{Wil09}).} \cite{HR00}.


%

\begin{prop}[Higson-Roe condition]\label{prop: Hig-Roe condition}
Let $(X,d)$ be a discrete metric space of bounded geometry. Then the following are equivalent:
\begin{enumerate}
 \item $(X,d)$ has Property A;
 \item For any $\varepsilon>0$ and $R>0$, there exists a map $\xi: X\to \ell^2(X)_{1,+}$ such that $\xi$ has $(\varepsilon,R)$-variation and there exists a constant $S>0$ such that $\supp(\xi(x)) \subset B(x,S)$ for any $x\in X$.
\end{enumerate}
\end{prop}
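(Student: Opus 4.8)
The plan is to prove the two implications separately, passing back and forth between finite subsets of $X\times\NN$ and nonnegative unit vectors in $\ell^2$ via the characteristic-function/subgraph dictionary.

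\textbf{From (1) to (2).} Given Property A data $\{A_x\}_{x\in X}$ for the parameters $(\varepsilon^2, R)$ with constant $S$, I would first normalise each set to the unit vector $\eta_x:=\chi_{A_x}/\|\chi_{A_x}\|_2 \in \ell^2(X\times\NN)_{1,+}$. Writing $a=\sharp A_x$, $b=\sharp A_y$ and $c=\sharp(A_x\cap A_y)$, a direct computation gives
\[
\|\eta_x-\eta_y\|_2^2 = 2\Big(1-\frac{c}{\sqrt{ab}}\Big) \le \frac{\sharp(A_x\Delta A_y)}{\sharp(A_x\cap A_y)},
\]
where the last inequality uses the bound $\sqrt{ab}\le (a+b)/2$ together with $a+b\ge 2c$. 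Hence $\|\eta_x-\eta_y\|_2\le\varepsilon$ whenever $d(x,y)\le R$. It remains to transport $\eta_x$ from $\ell^2(X\times\NN)$ to $\ell^2(X)$: I would use the fibrewise flattening $F\colon \ell^2(X\times\NN)_{1,+}\to \ell^2(X)_{1,+}$ defined by $F(\eta)(x):=\big(\sum_{n\in\NN}\eta(x,n)^2\big)^{1/2}$. This map preserves the $\ell^2$-norm (so $F(\eta_x)$ is again a nonnegative unit vector), sends $\supp(\eta)$ to its projection on $X$ (so $\supp F(\eta_x)\subseteq B(x,S)$), and is $1$-Lipschitz by the reverse triangle inequality applied in each fibre $\{x\}\times\NN$. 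Setting $\xi(x):=F(\eta_x)$ then yields the desired map with $(\varepsilon, R)$-variation.

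\textbf{From (2) to (1).} Conversely, suppose $\xi$ has $(\varepsilon, R)$-variation with $\supp\xi(x)\subseteq B(x,S)$, and abbreviate $f=\xi(x)$, $g=\xi(y)$. The elementary estimate $\|f^2-g^2\|_1 \le \|f-g\|_2\,\|f+g\|_2 \le 2\|f-g\|_2 \le 2\varepsilon$ (valid for $d(x,y)\le R$) lets me pass from $\ell^2$-control of $\xi$ to $\ell^1$-control of the probability densities $\xi(x)^2$. I would then attach to each $x$ the subgraph
\[
A_x := \big\{(y,n)\in X\times\NN : 1\le n\le \lfloor N\,\xi(x)(y)^2\rfloor\big\}
\]
for a large integer $N$ to be chosen. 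Bounded geometry enters here: since $\supp\xi(x)\subseteq B(x,S)$, the cardinalities $\sharp\supp\xi(x)$ are bounded above by $K:=\sup_{x\in X}\sharp B(x,S)<\infty$. Counting columnwise and using $|\lfloor\alpha\rfloor-\lfloor\beta\rfloor|\le|\alpha-\beta|+1$ and $\lfloor\alpha\rfloor\ge\alpha-1$ gives
\[
\sharp(A_x\Delta A_y)\le 2N\varepsilon+2K, \qquad \sharp(A_x\cap A_y)\ge N(1-\varepsilon)-K,
\]
so that the ratio is at most $(2N\varepsilon+2K)/(N(1-\varepsilon)-K)$. Choosing $\varepsilon$ small and then $N$ large (depending on $\varepsilon$ and $K$) makes this smaller than any prescribed tolerance; since $A_x\subseteq B(x,S)\times\NN$ automatically, this recovers Property A.

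\textbf{Main obstacle.} The first implication is essentially formal once the flattening $F$ is in place. The delicate direction is the second: discretising the continuous densities $\xi(x)^2$ into honest finite subsets introduces the rounding errors recorded by the additive $K$-terms above, and the whole argument only closes because these errors are controlled \emph{uniformly} in $x$ by bounded geometry. As the footnote to Proposition \ref{prop: Hig-Roe condition} indicates, one could alternatively run the argument with $\ell^1$-functions throughout and then invoke the Mazur map to return to the $\ell^2$-formulation, but the direct route above keeps the support and uniformity bookkeeping most transparent.
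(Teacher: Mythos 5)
Your proof is correct, but note that the paper itself offers no proof to compare against: Proposition \ref{prop: Hig-Roe condition} is quoted there as a known theorem of Higson and Roe \cite{HR00}, with the translation between the $\ell^1$- and $\ell^2$-formulations delegated (in a footnote) to the Mazur maps of \cite[Theorem 1.2.4]{Wil09}. What you have written is essentially that standard argument, compressed so as to work directly in $\ell^2$, and both implications check out. In (1) $\Rightarrow$ (2), the identity $\|\eta_x-\eta_y\|_2^2=2(1-c/\sqrt{ab})$ and the bound by $(a+b-2c)/c$ via $\sqrt{ab}\le(a+b)/2$ and $a+b\ge 2c$ are correct, and your flattening $F$ is indeed norm-preserving, support-projecting and $1$-Lipschitz (the only implicit point worth recording is that each $A_x$ is nonempty, which is forced by Definition \ref{defn: Property A} applied with $x=y$). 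In (2) $\Rightarrow$ (1), the estimate $\|\xi(x)^2-\xi(y)^2\|_1\le 2\varepsilon$ is the Mazur-map inequality, the column-counting formulas for $\sharp(A_x\Delta A_y)$ and $\sharp(A_x\cap A_y)$ hold because the columns of your sets are nested initial segments of $\NN$, and the rounding errors are confined to at most $2K$ (resp.\ $K$) columns, giving exactly your stated bounds; choosing first $\varepsilon$ small and then $N$ large (in particular $N>K/(1-\varepsilon)$, which also makes $A_x\cap A_y\neq\emptyset$ so that the ratio in Definition \ref{defn: Property A} is defined) closes the argument. Compared with the route through \cite{Wil09} --- which first proves the equivalence of Property A with the $\ell^1$-version of condition (2) by the same floor-function discretisation, and only afterwards converts $\ell^1\leftrightarrow\ell^2$ by the Mazur maps --- your version merges the two steps into each implication; what this buys is a self-contained proof with explicit constants and transparent support bookkeeping, and you correctly isolate bounded geometry as being needed only in the direction (2) $\Rightarrow$ (1).
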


\begin{rem}\label{rem:Property A explanation}
Note that when $X$ comes from a finitely generated group equipped with a word length metric, then condition (2) above is also equivalent to the following: for any $\varepsilon>0$, there exists a map $\xi: X\to \ell^2(X)_{1,+}$ such that $\xi$ has $(\varepsilon,1)$-variation and there exists a constant $S>0$ such that $\supp(\xi(x)) \subset B(x,S)$ for any $x\in X$.
\end{rem}

It is well-known that for a finitely generated group $G$ with a word length metric, amenability implies Property A. This can be easily seen either from the definitions or the analytic characterisations. 

Similarly, we would also like to consider the following notion of Property A for the family case.

\begin{defn}\label{defn: uniformly property A}
Let $\{X_i\}_{i\in I}$ be a family of discrete metric spaces. We say that $\{X_i\}_{i\in I}$ has \emph{uniform Property A} if for any $\varepsilon>0$ and $R>0$, there exists $D>0$ and a family of functions $\{\xi_i: X_i \to \ell^2(X_i)_{1,+}\}_{i\in I}$ such that each $\xi_i$ has $(\varepsilon,R)$-variation and $\supp(\xi_i(x)) \subset B(x,D)$ for any $x\in X_i$ and $i \in I$.
\end{defn}

\section{Main results}\label{sec:main results}

In this section, we present several versions of the main results under different hypotheses and provide detailed proofs. Let us start with the general case.

\begin{thm}\label{thm:main result general case}
Let $\{G_i\}_{i\in I}$ be a family of finitely generated groups with finite generating set $S_i$ for each $i$. Consider the following conditions:
\begin{enumerate}
 \item For any $\varepsilon>0$, there exists $M>0$ and a family of functions $\{f_i\}_{i\in I}$ such that each $f_i \in \ell^2(G_i)_{1,+} \cap \ell^1(G_i)$ satisfies $\|f_i-s\cdot f_i\|_2\leq \varepsilon$ for $s\in S_i$ and $\|f_i\|_1 \leq M$.
 \item For any $\varepsilon>0$, there exists $N>0$ and a family of functions $\{f_i\}_{i\in I}$ such that each $f_i \in \ell^2(G_i)_{1,+}$ satisfies $\|f_i-s\cdot f_i\|_2\leq \varepsilon$ for $s\in S_i$ and $\sharp \supp(f_i) \leq N$.
 \item The family $\{(G_i,S_i)\}_{i\in I}$ is uniformly amenable.
\end{enumerate}
Then (1) $\Rightarrow$ (2) $\Rightarrow$ (3). Additionally, if there exists $L>0$ such that $\sharp S_i \leq L$ for each $i$, then the three conditions above are equivalent.
\end{thm}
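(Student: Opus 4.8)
The plan is to establish the cycle (1) $\Rightarrow$ (2) $\Rightarrow$ (3) directly, and then, under the extra hypothesis $\sharp S_i \leq L$, to close the loop with (3) $\Rightarrow$ (1); throughout I would keep the functions nonnegative and $\ell^2$-normalised and exploit that left-translation on $G_i$ commutes with the right-translations $s\cdot(-)$, so every relevant quantity is left-invariant. For (1) $\Rightarrow$ (2) I would use a truncation argument. Given $f_i \in \ell^2(G_i)_{1,+}\cap \ell^1(G_i)$ with $\|f_i\|_1 \leq M$ and $\|f_i-s\cdot f_i\|_2 \leq \varepsilon$, set $h_i := (f_i-t)_+$ for a threshold $t$ to be chosen. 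Since $x\mapsto (x-t)_+$ is $1$-Lipschitz, one has $\|h_i - s\cdot h_i\|_2 \leq \|f_i - s\cdot f_i\|_2 \leq \varepsilon$, so the variation does not increase; and by Markov's inequality $\sharp\supp(h_i) = \sharp\{f_i>t\} \leq \|f_i\|_1/t \leq M/t$. Splitting $\|f_i\|_2^2$ over $\{f_i\leq t\}$ and $\{f_i>t\}$ shows $\|h_i\|_2^2 \geq 1-3tM$, so the choice $t = 1/(6M)$ keeps $\|h_i\|_2 \geq 1/\sqrt 2$ while forcing $\sharp\supp(h_i)\leq 6M^2$. Renormalising $g_i := h_i/\|h_i\|_2$ then gives a family with $\sharp\supp(g_i)\leq 6M^2 =: N$ and variation at most $\sqrt 2\,\varepsilon$, and feeding in $\varepsilon/\sqrt 2$ at the start absorbs the constant.

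For (2) $\Rightarrow$ (3) I would decompose each support into connected components of the Cayley graph of $(G_i,S_i)$. Since $\sharp\supp(f_i)\leq N$, there are at most $N$ components, and each, being connected with at most $N$ vertices, has diameter at most $N-1$. Because two adjacent points both lying in the support share a component, restricting $f_i$ to a single component $C$ only deletes or preserves individual terms of $\sum_\gamma |f_i(\gamma)-f_i(\gamma s)|^2$; hence the restrictions $f_i^{(C)}$ satisfy $\sum_C \|f_i^{(C)} - s\cdot f_i^{(C)}\|_2^2 \leq \|f_i - s\cdot f_i\|_2^2 \leq \varepsilon^2$ while $\sum_C \|f_i^{(C)}\|_2^2 = 1$. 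Selecting the component $C_0$ of largest $\ell^2$-mass, which satisfies $\|f_i^{(C_0)}\|_2^2 \geq 1/N$, renormalising, and then left-translating a chosen vertex of $C_0$ to $1_{G_i}$ produces a function in $\ell^2(G_i)_{1,+}$ supported in $B(1_{G_i},N-1)$. Taking $D = N-1$ then delivers the support condition in the definition of uniform amenability.

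For (3) $\Rightarrow$ (1) under $\sharp S_i\leq L$ I would observe that $B(1_{G_i},D)$ consists of words of length at most $D$ in at most $2L$ letters, so $\sharp B(1_{G_i},D) \leq (2L+1)^D$ uniformly in $i$. Since $\supp(f_i)\subseteq B(1_{G_i},D)$ forces $\sharp\supp(f_i)\leq (2L+1)^D$, Cauchy--Schwarz gives $\|f_i\|_1 \leq \sqrt{\sharp\supp(f_i)}\,\|f_i\|_2 \leq (2L+1)^{D/2} =: M$, and the very same functions witness (1). This is precisely the point where the hypothesis $\sharp S_i\leq L$ is used, namely to convert a bound on the diameter of the support into a bound on its cardinality.

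The step I expect to be most delicate is (2) $\Rightarrow$ (3): passing to the largest component inflates the normalised variation by the factor $\|f_i^{(C_0)}\|_2^{-1}\leq \sqrt N$, so the output variation is only controlled by $\sqrt N\,\varepsilon$. The main obstacle is therefore to guarantee that this quantity can be driven below any prescribed threshold, i.e. that one can choose the input parameter of (2) small enough relative to the resulting $N$ so that $\sqrt N\,\varepsilon$ becomes negligible. I would address this by quantifying the interplay between $\varepsilon$ and $N$, using that small variation forces each mass-carrying component to be comparatively large and hence the number of components, and the mass lost on restricting to $C_0$, to remain under control. Once the correct thresholds are fixed and the left-invariance of the variation is exploited, the other implications are comparatively routine.
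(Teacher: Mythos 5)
Your implication (1) $\Rightarrow$ (2) is correct, and is in fact a cleaner variant of the paper's argument: the paper also truncates (its $f_i-g_i$ with $g_i=\min\{f_i,c_i\}$ is exactly $(f_i-c_i)_+$), but it selects the cut level $c_i$ via an intermediate value argument, whereas your fixed threshold $t=1/(6M)$ plus Markov's inequality reaches the same support bound of order $M^2$ more directly. Your (3) $\Rightarrow$ (1) under $\sharp S_i\leq L$ is also fine, and that is indeed the only place where the hypothesis $\sharp S_i\leq L$ is needed. The genuine gap is in (2) $\Rightarrow$ (3), exactly at the step you flag as delicate, and the repair you sketch cannot work. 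In condition (2) the constant $N$ is \emph{supplied by the hypothesis as a function of} $\varepsilon$: feeding in a smaller parameter $\varepsilon'$ returns some $N(\varepsilon')$ over which you have no control, so there is no way to arrange that $\sqrt{N(\varepsilon')}\,\varepsilon'$ is small --- the dependence runs in the wrong direction. Moreover, the premise behind your proposed fix (that small variation keeps the number of components, and the mass lost by passing to $C_0$, under control) is false. Take $G_i=\ZZ$ (or $\ZZ/m_i$ with $m_i$ huge) and, as a witness for (2), a function consisting of one single-point spike of height $\varepsilon/2$ together with roughly $5/\varepsilon^2$ pairwise far-apart smooth bumps, each of $\ell^2$-mass about $\varepsilon/\sqrt{5}$ and supported on an interval of length about $1/\varepsilon$: the total variation is at most $\varepsilon$ and $\sharp\supp \sim 1/\varepsilon^3$, but the largest-mass component is the spike, whose renormalised variation equals $\sqrt{2}$ no matter how small $\varepsilon$ is. So selecting the component of largest $\ell^2$-mass outputs functions that are not almost invariant at all.

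There are two ways to close this gap, and it is worth comparing them. A pigeonhole selection of a \emph{single} component does work: since $\sum_{C}\|f_i^{(C)}-s\cdot f_i^{(C)}\|_2^2\leq\varepsilon^2$ for every $s\in S_i$ while $\sum_{C}\|f_i^{(C)}\|_2^2=1$, some component $C^*$ satisfies $\sum_{s\in S_i}\|f_i^{(C^*)}-s\cdot f_i^{(C^*)}\|_2^2\leq \sharp S_i\,\varepsilon^2\,\|f_i^{(C^*)}\|_2^2$, so after renormalising and translating, the variation is at most $\sqrt{\sharp S_i}\,\varepsilon$. But this loses a factor $\sqrt{\sharp S_i}$ and therefore only proves (2) $\Rightarrow$ (3) under the extra hypothesis $\sharp S_i\leq L$, whereas the theorem asserts this implication with no bound on the generating sets. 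The paper's proof avoids any loss by \emph{keeping every component}: it left-translates the ($2$-connected) components $U_{i,k}$ by elements $\gamma_{i,k}$ chosen along a geodesic issuing from $1_{G_i}$, spaced so that the translates stay pairwise at distance at least $3$ and all lie in $B(1_{G_i},4(N+2)N)$ (with a separate trivial case, the normalised constant function, when $\diam(G_i)\leq 4(N+1)N$). Since left translation is an $\ell^2$-isometry commuting with the right translations $s\cdot(-)$, the glued function $g_i=\sum_k\gamma_{i,k}\star f_i^{(k)}$ still has norm one and variation at most $\varepsilon$, with support in a ball whose radius depends only on $N$. To salvage your write-up, replace the largest-mass selection either by this translation argument, or by the pigeonhole selection together with an explicit weakening of the statement to families with uniformly bounded generating sets.
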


Note that when the index set $I$ is finite, Theorem \ref{thm:main result general case} holds trivially by an elementary approximation argument. Hence the main contribution of this result is on the uniformality of parameters.

To prove Theorem \ref{thm:main result general case}, we need the following auxiliary lemma:

\begin{lem}\label{lem:existence of cut}
Let $X$ be a set and $f: X \to [0,1]$ be a function in $\ell^2(X)_{1,+}$ with finite support. Then for any $\varepsilon\in [0,1]$, there exists $c\geq 0$ such that $\|f_c\|_2=\varepsilon$ where $f_c(x):=\min\{f(x), c\}$ for $x\in X$.
\end{lem}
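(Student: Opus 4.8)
The plan is to reduce the statement to a single application of the intermediate value theorem. Define $\phi \colon [0,\infty) \to [0,\infty)$ by $\phi(c) := \|f_c\|_2$, where $f_c(x) = \min\{f(x),c\}$ as in the statement. I would first record the two extreme values of $\phi$. Since $f(x) \geq 0$ for all $x$, we have $f_0 \equiv 0$ and hence $\phi(0) = 0$. On the other hand, because $\supp(f)$ is finite the maximum $C := \max_{x\in X} f(x)$ is attained, and for every $c \geq C$ one has $\min\{f(x),c\} = f(x)$ for all $x$, so $f_c = f$ and therefore $\phi(c) = \|f\|_2 = 1$. Thus $\phi$ attains both of the values $0$ and $1$.

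The key remaining point is the continuity of $\phi$ on $[0,C]$. For each fixed $x$ the map $c \mapsto \min\{f(x),c\}$ is continuous, being the minimum of two continuous functions, and hence so is $c \mapsto \min\{f(x),c\}^2$. Since $\supp(f)$ is finite, the expression
\[
\phi(c)^2 = \sum_{x \in \supp(f)} \min\{f(x),c\}^2
\]
is a finite sum of continuous functions, hence continuous; taking the (continuous) square root shows that $\phi$ is continuous. This is precisely the place where the finite-support hypothesis is used: it turns $\phi^2$ into a finite sum, so continuity is immediate with no convergence issues to address.

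Finally, given $\varepsilon \in [0,1]$, the continuity of $\phi$ together with $\phi(0) = 0$ and $\phi(C) = 1$ allows the intermediate value theorem to produce some $c \in [0,C]$ with $\phi(c) = \varepsilon$, which is exactly the desired conclusion. One may additionally observe that $\phi$ is non-decreasing, since $c_1 \leq c_2$ forces $f_{c_1} \leq f_{c_2}$ pointwise and hence $\|f_{c_1}\|_2 \leq \|f_{c_2}\|_2$, although monotonicity is not actually needed for the argument. There is essentially no serious obstacle in this proof; the only step meriting care is the verification of continuity, which becomes routine once the finite-support assumption is invoked.
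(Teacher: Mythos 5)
Your proposal is correct and follows essentially the same route as the paper: both apply the Intermediate Value Theorem to the function $c \mapsto \|f_c\|_2$, using the finite-support hypothesis to justify continuity and evaluating the endpoints to get the values $0$ and $1$ (the paper simply uses $F(1)=1$, since the range of $f$ lies in $[0,1]$, instead of your $\phi(C)=1$ with $C=\max_x f(x)$). Your write-up just spells out the continuity verification that the paper leaves as ``clear.''
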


\begin{proof}
It is clear that the function $F(c):=\|f_c\|_2$ for $c\in [0,1]$ is continuous since $\supp(f)$ is finite. Note that $F(0)=0$ and $F(1)=1$, hence the result follows from the Intermediate Value Theorem for continuous functions.
\end{proof}

\begin{proof}[Proof of Theorem \ref{thm:main result general case}]
``(1) $\Rightarrow$ (2)'': Given $\varepsilon>0$, take $M>0$ and a family of functions $\{f_i\}_{i\in I}$ satisfying condition (1) for $\varepsilon$ and $M$. Approximating each $f_i$ by finitely support functions, we can assume that each $f_i$ has finite support. Set $\hat{\varepsilon}:=\frac{\varepsilon}{2+2\varepsilon}$. By Lemma \ref{lem:existence of cut}, there exists $c_i>0$ for each $i \in I$ such that $\|g_i\|_2 = \hat{\varepsilon}$, where $g_i(\gamma):=\min\{f_i(\gamma), c_i\}$ for $\gamma \in G_i$. Set $\supp_{\hat{\varepsilon}}(f_i):=\{\gamma \in G_i: f_i(\gamma) \geq c_i\}$.

We claim: $\sharp \supp_{\hat{\varepsilon}}(f_i)$ is uniformly bounded with respect to $i \in I$. In fact, we have:
\[
\hat{\varepsilon}^2 = \sum_{\gamma \in G_i} g_i(\gamma)^2 \leq c_i \cdot \sum_{\gamma \in G_i} g_i(\gamma)
\]
since $g_i(\gamma) \leq c_i$ for $\gamma \in G_i$. Then we have
\[
M \geq \|f_i\|_1 = \sum_{\gamma \in G_i} f_i(\gamma) \geq \sum_{\gamma \in G_i} g_i(\gamma) \geq \frac{\hat{\varepsilon}^2}{c_i},
\]
which implies that $c_i \geq \hat{\varepsilon}^2/M$ for all $i \in I$. On the other hand, we have
\[
\hat{\varepsilon}^2 = \sum_{\gamma \in G_i} g_i(\gamma)^2 \geq c_i^2 \cdot \sharp \supp_{\hat{\varepsilon}}(f_i) \geq \frac{\hat{\varepsilon}^4}{M^2} \cdot \sharp \supp_{\hat{\varepsilon}}(f_i).
\]
Hence we obtain that $\sharp \supp_{\hat{\varepsilon}}(f_i) \leq M^2/\hat{\varepsilon}^2$, which conclude the claim.

Back to the proof, let us define $h_i:=\frac{f_i-g_i}{\|f_i - g_i\|_2}$ for $i \in I$. Then $\supp(h_i)\subseteq\supp_{\hat{\varepsilon}}(f_i)$, which has uniformly bounded cardinality by the claim above. Also for any $s \in S_i$, we have
\[
\|h_i - s \cdot h_i\|_2 \leq \frac{\|f_i - s \cdot f_i\|_2 + \|g_i - s \cdot g_i\|_2}{\|f_i - g_i\|_2} \leq \frac{\varepsilon + 2\hat{\varepsilon}}{1-\hat{\varepsilon}} = 2\varepsilon.
\]
Hence we conclude condition (2).

``(2) $\Rightarrow$ (3)'': Given $\varepsilon>0$, take $N>0$ and a family of functions $\{f_i\}_{i\in I}$ satisfying condition (2) for $\varepsilon$ and $N$. For each $i \in I$, let $\{U_{i,k}\}_{k=1}^{N_i}$ be the family of $2$-connected components of $\supp(f_i)$, and denote $f_i^{(k)}:=f_i \cdot \chi_{U_k}$ for each $k$. Note that the family $\{U_{i,k}\}_{k=1}^{N_i}$ is mutually at least $3$-apart from each other, hence for any $s \in S_i$ we have:
\[
\varepsilon^2 \geq \|f_i - s \cdot f_i\|_2^2 = \sum_{k=1}^{N_i} \|f_i^{(k)} - s \cdot f_i^{(k)}\|_2^2.
\]
Also note that $N_i \leq N$ and for each $i\in I$ and $k=1,\cdots,N_i$, there exists $\alpha_{i,k} \in G_i$ such that $U_{i,k} \subseteq B(\alpha_{i,k}, 2N)$. 

If $\diam(G_i) \leq 4(N+1)N$, then we simply take $g_i:= \frac{1}{(\sharp G_i)^{1/2}}\chi_{G_i}$. Otherwise, choose a point $\beta_i \in G_i$ such that $d_i(\beta_i, 1_{G_i}) = 4(N+1)N$ and a finite sequence of elements $1_{G_i}=\alpha'_{i,1}, \alpha'_{i,2}, \cdots, \alpha'_{i,N_i}$ on a geodesic connecting $1_{G_i}$ and $\beta_i$ such that $d_i(\alpha'_{i,k}, \alpha'_{i,l}) = 4(N+1)|k-l|$ for $k,l=1,\cdots, N_i$ (note that we do not require $\alpha'_{i,N_i} = \beta_i$). Set $\gamma_{i,k}:=\alpha'_{i,k} \alpha_{i,k}^{-1}$ for $k=1,\cdots,N_i$. Then we have the following:
\begin{enumerate}[(i)]
 \item $d(\gamma_{i,k} \cdot U_{i,k} , \gamma_{i,l} \cdot U_{i,l} ) \geq 3$ for any $k \neq l$ in $\{1,2,\cdots,N_i\}$;
 \item the union $\bigcup_{k=1}^{N_i}\gamma_{i,k} \cdot U_{i,k} $ is contained in $B(1_{G_i}, 4(N+2)N)$.
\end{enumerate}
Set $g_i:= \sum_{k=1}^{N_i} \gamma_{i,k} \star f_i^{(k)}$, where $\gamma_{i,k} \star f_i^{(k)}$ is the left $\gamma_{i,k}$-translation of $f_i^{(k)}$ defined in Section \ref{ssec:amenability}. It follows from the construction (see condition (i) above) that
\[
\|g_i\|^2_2 = \sum_{k=1}^{N_i} \|\gamma_{i,k} \star f_i^{(k)}\|_2^2 = \sum_{k=1}^{N_i} \|f_i^{(k)}\|_2^2 = \|f_i\|^2_2 =1,
\]
and
\begin{align*}
\|g_i - s \cdot g_i\|^2_2 &= \sum_{k=1}^{N_i} \|\gamma_{i,k} \star f_i^{(k)} - s\cdot (\gamma_{i,k} \star f_i^{(k)})\|^2_2 = \sum_{k=1}^{N_i} \|\gamma_{i,k} \star (f_i^{(k)} - s\cdot f_i^{(k)})\|^2_2\\
&= \sum_{k=1}^{N_i} \|f_i^{(k)} - s\cdot f_i^{(k)}\|^2_2 \leq \varepsilon^2
\end{align*}
for $s \in S_i$, where we use the commutativity of the left translation with the right translation in the second equality. Finally, it follows from condition (ii) above that
\[
\supp(g_i) \subseteq \bigcup_{k=1}^{N_i} \gamma_{i,k} \cdot U_{i,k} \subseteq B(1_{G_i}, 4(N+2)N).
\]
Hence we conclude the proof.
\end{proof}

\begin{rem}\label{rem:quan.expln.}
As mentioned above, the main contribution of Theorem \ref{thm:main result general case} is to provide quantitative relations among these conditions. Indeed it follows from the proof that for $\varepsilon, M$ in condition (1), we may choose $2\varepsilon, N=\frac{4M^2(1+\varepsilon)^2}{\varepsilon^2}$ for condition (2). For $\varepsilon', N'$ in condition (2), we may choose $\varepsilon', 4(N'+2)N'$ for condition (3).
\end{rem}

Note that in Theorem \ref{thm:main result general case}, we do not require the assumption of amenability on each $G_i$ at the very beginning, while this becomes an outcome of conditions therein. In the following, we will explore the situation when we already know that each $G_i$ is amenable, \emph{e.g.}, $G_i$ is finite.

Let us first recall the following, which is the $\ell^2$-version of \cite[Theorem 4.4.4]{NY12}.

\begin{lem}\label{lem:thm 4.4.4 in NY12}
Let $G$ be a finitely generated amenable group with finite generating set $S$. Then for any $\varepsilon>0$ and $D>0$, the following are equivalent:
\begin{enumerate}
 \item There exists a function $f \in \ell^2(G)_{1,+}$ such that $\|f-s\cdot f\|_2\leq \varepsilon$ for $s\in S$ and $\supp(f) \subseteq B(1_G, D)$.
 \item There exists a map $\xi: G \to \ell^2(G)_{1,+}$ such that $\xi$ has $(\varepsilon,1)$-variation and $\supp(\xi_x) \subseteq B(x, D)$.
\end{enumerate}
\end{lem}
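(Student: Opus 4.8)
The statement to prove is Lemma \ref{lem:thm 4.4.4 in NY12}, which establishes the equivalence of two conditions for an amenable group $G$: (1) existence of a single Følner-type function $f$ concentrated near the identity, and (2) existence of a Property-A-type family $\xi$ with controlled variation and support. Let me sketch a proof plan.

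\medskip

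The plan is to prove both implications by exploiting the left-invariance of the word metric $d_S$, which lets me transport a single function at the identity to a whole family indexed by group elements, and vice versa.

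\medskip

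\textbf{Direction (1) $\Rightarrow$ (2).} Given $f$ as in (1), the natural idea is to define $\xi_x := x \star f$ for each $x \in G$, where $\star$ denotes the left translation introduced in Section \ref{ssec:amenability}. Since left translation is an isometry of $\ell^2(G)$ that preserves positivity and the normalisation, each $\xi_x$ lies in $\ell^2(G)_{1,+}$. For the support condition, I would observe that $\supp(f) \subseteq B(1_G, D)$ together with left-invariance of $d_S$ gives $\supp(x \star f) = x \cdot \supp(f) \subseteq B(x, D)$, which is exactly (2). The variation estimate is where I need to be slightly careful: for $x, y$ with $d_S(x,y) \leq 1$, I want $\|\xi_x - \xi_y\|_2 \leq \varepsilon$. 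Writing $\|x \star f - y \star f\|_2 = \|f - (x^{-1}y) \star f\|_2$ using that $\star$ is an isometric action, I reduce to controlling $\|f - s' \star f\|_2$ where $s' = x^{-1}y$ satisfies $\ell_S(s') \leq 1$, i.e.\ $s' \in S \cup S^{-1} \cup \{1\}$. The hypothesis gives control of $\|f - s \cdot f\|_2$ (right translation) for $s \in S$, so the main technical point is relating left and right translation norms; since $\|f - s \star f\|_2 = \|s^{-1} \cdot f - f\|_2$ after an isometric change of variable, and generating sets can be taken symmetric, this matches the hypothesis up to passing between $s$ and $s^{-1}$.

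\medskip

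\textbf{Direction (2) $\Rightarrow$ (1).} Given the family $\xi$, the idea is to simply set $f := \xi_{1_G}$. Then $f \in \ell^2(G)_{1,+}$ and $\supp(f) = \supp(\xi_{1_G}) \subseteq B(1_G, D)$ directly from (2). For the Følner estimate, for each $s \in S$ I have $d_S(1_G, s) = \ell_S(s) \leq 1$, so the $(\varepsilon,1)$-variation of $\xi$ yields $\|\xi_{1_G} - \xi_s\|_2 \leq \varepsilon$. The remaining step is to convert this into a statement about $f - s \cdot f$; here $\xi_s$ need not equal $s \star \xi_{1_G}$ in general, so unlike the forward direction I cannot assume the family is translation-generated. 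This means the straightforward choice $f = \xi_{1_G}$ controls $\|\xi_{1_G} - \xi_s\|_2$ but not obviously $\|f - s \cdot f\|_2$. I expect this to be the main obstacle, and I anticipate resolving it by noting that amenability of $G$ is assumed as a standing hypothesis, so one may appeal to the translation-invariant construction (as in the averaging argument of \cite[Theorem 4.4.4]{NY12}): one averages or symmetrises the family $\xi$ over translates to replace it with a genuinely left-equivariant family, after which $\xi_s = s \star \xi_{1_G}$ and the variation bound transfers exactly to the right-translation Følner bound for $f = \xi_{1_G}$.

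\medskip

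The heart of the argument is thus the bookkeeping between left translation $\star$ (used to build the family from $f$) and right translation $\cdot$ (appearing in the Følner condition), both of which are isometries of $\ell^2(G)$ that commute with each other. The hardest part will be the $(2) \Rightarrow (1)$ direction, where an arbitrary Property-A family must be promoted to a translation-invariant one before the single-function Følner estimate can be extracted; this is precisely where the amenability hypothesis on $G$ is essential, since it guarantees such an equivariant replacement exists.
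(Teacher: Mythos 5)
Your direction (1) $\Rightarrow$ (2) rests on a false identity, at exactly the spot you call the main technical point. The claim $\|f - s\star f\|_2 = \|s^{-1}\cdot f - f\|_2$ is wrong: a change of variables can move inverses around on a \emph{fixed} side (indeed $\|f - s^{-1}\cdot f\|_2 = \|f - s\cdot f\|_2$), but it cannot turn a left translation into a right translation; the substitution $h\mapsto h^{-1}$ identifies the left-translation differences of $f$ with the right-translation differences of the flipped function $\check f(h):=f(h^{-1})$, not of $f$ itself. Concrete counterexample: in the (finite, hence amenable) group $S_3$, let $A=\{e,(12),(123)\}$, $f=\tfrac{1}{\sqrt{3}}\chi_A\in\ell^2(S_3)_{1,+}$ and $s=(13)$. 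Composing permutations right to left, $sA=\{(13),(123),(12)\}$ meets $A$ in two elements, while $As=\{(13),(132),(23)\}$ is disjoint from $A$; hence $\|f-s\star f\|_2^2=2-\tfrac{4}{3}=\tfrac{2}{3}$ while $\|f-s\cdot f\|_2^2=2$. So $\xi_x:=x\star f$ can badly fail the variation bound, and symmetrising the generating set does not help. The standard repair keeps your outline intact: set $\xi_x(h):=f(h^{-1}x)$, i.e.\ $\xi_x=x\star\check f$. Then $\supp(\xi_x)\subseteq B(x,D)$ because $\ell_S(g)=\ell_S(g^{-1})$, and for $y=xs'$ with $\ell_S(s')\leq 1$ a legitimate change of variables gives $\|\xi_x-\xi_y\|_2=\|f-s'\cdot f\|_2\leq\varepsilon$, which is exactly the hypothesis. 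The same left/right confusion reappears at the end of your (2) $\Rightarrow$ (1): a left-equivariant family makes $\xi_{1_G}$ almost invariant under \emph{left} translations, so there too the correct extraction is $f(g):=\xi_{1_G}(g^{-1})$ rather than $f=\xi_{1_G}$.

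The second, more structural gap is in (2) $\Rightarrow$ (1): the ``averaging over translates'' cannot be performed directly in $\ell^2$. The invariant-mean construction behind \cite[Theorem 4.4.4]{NY12} is $f(g):=\mu\bigl(x\mapsto \xi_x(xg)\bigr)$ for an invariant mean $\mu$, and it preserves normalisation only in $\ell^1$, where positivity and linearity give $\|f\|_1=\mu\bigl(x\mapsto\|\xi_x\|_1\bigr)=1$. For an $\ell^2$-normalised family one only gets $\|f\|_2\leq 1$ (Cauchy--Schwarz for means), with lower bound $\|f\|_2\geq(\sharp B(1_G,D))^{-1/2}$; renormalising then multiplies the almost-invariance estimate by $(\sharp B(1_G,D))^{1/2}$. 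A loss depending on $D$ is fatal here: the whole point of the lemma (as the paper remarks right after stating it) is control of the parameter $D$, and in the intended application $D$ grows as $\varepsilon$ shrinks, so $\varepsilon\sqrt{\sharp B(1_G,D)}$ need not be small. This is precisely why the paper never argues in $\ell^2$ at all: its proof applies the Mazur maps $f\mapsto f^2$ and $f\mapsto\sqrt{f}$, which preserve supports and distort the estimates by a $D$-independent amount, to replace both conditions by their $\ell^1$-counterparts, and then invokes \cite[Theorem 4.4.4]{NY12} wholesale. Once you insert this passage to $\ell^1$ before averaging (and the flip above), your plan goes through; your repaired direct construction for (1) $\Rightarrow$ (2) is then a nice byproduct, since it stays in $\ell^2$, is exact in $\varepsilon$, and avoids the Mazur-map loss that the paper's route incurs.
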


\begin{proof}
Applying the Mazur maps
\[
M_{2,1}: \ell^2(G)_{1,+} \to \ell^1(G)_{1,+} \mbox{~defined~by~} (M_{2,1}f)(x):=f(x)^2, x\in G
\]
and
\[
M_{1,2}: \ell^1(G)_{1,+} \to \ell^2(G)_{1,+} \mbox{~defined~by~} (M_{1,2}f)(x):=\sqrt{f(x)}, x\in G,
\]
it is easy to see (\emph{e.g.}, \cite[Theorem 1.2.4, (2) $\Rightarrow$ (3)]{Wil09}) that condition (1) and (2) are equivalent to their $\ell^1$-counterparts (\emph{i.e.}, replacing $\ell^2(G)_{1,+}$ by $\ell^1(G)_{1,+}$ therein)  without changing the supports, respectively. Now we conclude the result thanks to \cite[Theorem 4.4.4]{NY12}.
\end{proof}

Note that the emphasis of the above lemma is on the parameter $D$. Hence it helps to bridge the condition of uniform amenability in Theorem \ref{thm:main result general case} with uniform Property A, which will be packaged into Theorem \ref{thm:main result amenability}.

On the other hand, we would also like to explore an alternative description for condition (1) in Theorem \ref{thm:main result general case} in terms of operator norms under the assumption of amenability, where the operator is defined as follows:

\begin{defn}\label{defn:associated kernels}
For a finitely generated group $G$ and a function $f \in \ell^1(G) \cap \ell^2(G) = \ell^1(G)$ with range in $[0,1]$, the \emph{associated kernel of $f$} is the $G$-by-$G$ matrix $K_f$ defined by $K_f(x,y):= \langle x \star f, y \star f \rangle_{\ell^2(G)}$ for $x,y\in G$.
\end{defn}

For any $G$-by-$G$ matrix $K$ with complex values, we can associate an operator $T_K$ on the space $C_c(G)$ of finitely supported complex-valued functions on $G$ to itself by formal multiplication:
\[
\left(T_K(h)\right)(\gamma) :=\sum_{\gamma' \in G} K(\gamma, \gamma')h(\gamma')
\]
for any $h\in C_c(G)$ and $\gamma \in G$. To simplify the notation, we denote $T_f$ instead of $T_{K_f}$ for $f \in \ell^1(G)$.

The following result relates the operator norm of $T_f$ with the $\ell^1$-norm of $f$.

\begin{prop}\label{prop:op norm and l1 norm}
For a finitely generated amenable group $G$ and a function $f \in \ell^1(G)$ with range in $[0,1]$, the operator $T_f$ can be continuously extended to a bounded linear operator on $\ell^2(G)$ with operator norm equals $\|f\|_1^2$.
\end{prop}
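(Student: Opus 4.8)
The plan is to compute the operator norm of $T_f$ directly by unwinding the definition of the associated kernel $K_f$ and using amenability through the Hulanicki-Reiter condition (Proposition \ref{prop: Hul-Rei condition}). First I would observe that $K_f(x,y) = \langle x \star f, y \star f\rangle_{\ell^2(G)}$ can be rewritten more explicitly. Since $(x \star f)(h) = f(x^{-1}h)$, a change of variable gives $K_f(x,y) = \sum_{h \in G} f(x^{-1}h)\overline{f(y^{-1}h)} = \sum_{g \in G} f(g)\overline{f(x^{-1}yg)}$, so $K_f(x,y)$ depends only on $x^{-1}y$; that is, $K_f$ is the kernel of a right-convolution-type operator. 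Concretely, setting $\phi(z) := \sum_{g} f(g)\overline{f(zg)}$ (an $\ell^1$ function since $f \in \ell^1$ and is real-valued in $[0,1]$), we have $K_f(x,y) = \phi(x^{-1}y)$, and $T_f$ is then the operator of convolution against $\phi$ in the regular representation. This identification is the conceptual heart of the argument and lets me replace the abstract matrix with a familiar convolution operator.

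Next I would establish the upper bound $\|T_f\| \leq \|f\|_1^2$. Because $\phi = f^* * f$ in the group-algebra sense (where $f^*(z) = \overline{f(z^{-1})}$), the operator $T_f$ factors as $T_f = \lambda(\phi)$ where $\lambda$ is the left regular representation, and one has $\lambda(\phi) = \lambda(f)^*\lambda(f)$ or a suitable variant. In any case the operator norm is controlled by $\|\phi\|_1 \leq \|f\|_1^2$ via Young's inequality, since convolution by an $\ell^1$ function is bounded on $\ell^2$ with norm at most its $\ell^1$-norm. This gives $\|T_f\| \leq \|\phi\|_1 \leq \|f\|_1 \cdot \|f\|_1 = \|f\|_1^2$, and I would verify this factorisation carefully so that $T_f$ indeed extends continuously to all of $\ell^2(G)$.

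The harder direction is the reverse inequality $\|T_f\| \geq \|f\|_1^2$, and this is precisely where amenability enters. Since $T_f$ is a positive operator (being of the form $\lambda(f)^*\lambda(f)$), its norm equals $\sup_{\|h\|_2 = 1} \langle T_f h, h\rangle$. I would test against the Følner-type functions supplied by the Hulanicki-Reiter condition: for any $\varepsilon > 0$, choose $h \in \ell^2(G)_{1,+}$ with $\|h - s \cdot h\|_2 \leq \varepsilon$ for all $s \in S$ and finite support. The point is that such an $h$ is almost invariant under translation, so $\langle T_f h, h\rangle = \langle \lambda(\phi)h, h\rangle$ should approach $\phi(1)$'s Fourier-analytic value — more precisely, almost-invariance forces $\lambda(z)h \approx h$ in $\ell^2$ for $z$ in a bounded region containing $\supp(\phi)$, whence $\langle \lambda(\phi)h, h\rangle \approx \sum_z \phi(z)\langle \lambda(z)h, h\rangle \approx \sum_z \phi(z) = \|f\|_1^2$. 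The main obstacle is making this limiting argument uniform and rigorous: I must control the tail of $\phi$ outside a large ball and propagate the word-metric almost-invariance $\|h - s\cdot h\|_2 \leq \varepsilon$ to an estimate $\|h - z \star h\|_2 \leq \ell_S(z)\varepsilon$ for all relevant $z$, then bound $\sum_z |\phi(z)| \cdot \ell_S(z)\varepsilon$. This requires either that $\phi$ be approximated by a finitely supported function first (legitimate since $\phi \in \ell^1$) or a careful interchange of limits, after which letting $\varepsilon \to 0$ yields $\langle T_f h, h\rangle \to \|f\|_1^2$ and hence $\|T_f\| \geq \|f\|_1^2$. Combining the two inequalities completes the proof.
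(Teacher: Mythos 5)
Your proposal is correct and arrives at the same two inequalities, but your lower bound goes by a genuinely different route than the paper's. For the upper bound the two arguments essentially coincide: the paper computes $\langle T_f\alpha,\alpha\rangle=\|\alpha\ast f\|_2^2$ and applies Young's inequality, while you factor the kernel as $K_f(x,y)=\phi(x^{-1}y)$ with $\phi=f\ast f^*$ and apply Young's inequality to $\phi$; these are the same estimate in different clothing. For the lower bound, however, the paper stays with the F{\o}lner formulation (Definition \ref{defn: amenability}): it tests $T_f$ against the normalised indicator $\chi_{F_n}/(\sharp F_n)^{1/2}$ of a F{\o}lner set and runs an explicit counting estimate over $F_{n,m}=\bigcap_{g\in F_m}F_n g$, where $F_m$ is a finite set carrying all but $\varepsilon$ of the mass of $f$. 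You instead expand the quadratic form as $\langle T_f h,h\rangle=\sum_z\phi(z)\langle h,z\cdot h\rangle$ and test against Hulanicki-Reiter vectors from Proposition \ref{prop: Hul-Rei condition}, truncating $\phi$ to a finite set $F$ so that the unbounded factor $\ell_S(z)$ is harmless (the tail contributes at most $\sum_{z\notin F}\phi(z)$ by Cauchy-Schwarz, since $|\langle h,z\cdot h\rangle|\leq 1$), and using $\langle h,z\cdot h\rangle\geq 1-\ell_S(z)\varepsilon$ on $F$. Your route is softer and more conceptual --- it is precisely the standard fact that for amenable $G$ and nonnegative $\mu\in\ell^1(G)$ the regular representation satisfies $\|\rho(\mu)\|=\|\mu\|_1$, because the trivial representation is weakly contained in the regular one --- whereas the paper's is more elementary and entirely self-contained at the level of F{\o}lner sets. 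One bookkeeping slip you must fix when writing this up: $T_f$ is \emph{right} convolution by $\phi=f\ast f^*$, not $\lambda(f^*\ast f)$, so the coefficients appearing in your expansion are those of right translations $z\cdot h$. This is fortunate, because Proposition \ref{prop: Hul-Rei condition} provides almost invariance under right translations only: your intermediate claim $\|h-z\star h\|_2\leq\ell_S(z)\varepsilon$ for \emph{left} translations does not follow from it, while the right-translation version does (write $z$ as a word in $S$ and use that right translation is an isometry of $\ell^2(G)$). With the left/right conventions matched up, your argument is complete.
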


\begin{proof}
Given $\alpha=\sum_{\gamma\in G}\alpha(\gamma) \delta_\gamma \in C_c(G)$, we have:
\begin{align*}
\langle &T_f \alpha, \alpha \rangle_{\ell^2(G)} = \sum_{x,y \in G} \alpha(x)\overline{\alpha(y)} K_f(y,x) = \sum_{x,y \in G} \alpha(x)\overline{\alpha(y)} \cdot \langle y \star f, x \star f \rangle_{\ell^2(G)}\\
&=\sum_{x,y,z \in G}\alpha(x)\overline{\alpha(y)}f(x^{-1}z)f(y^{-1}z) = \sum_{z\in G}\left( \sum_{x\in G} \alpha(x)f(x^{-1}z)\right) \cdot \overline{\left( \sum_{y\in G} \alpha(y)f(y^{-1}z)\right)}\\
&=\sum_{z\in G} \left| \sum_{x\in G} \alpha(x)f(x^{-1}z) \right|^2 = \|\alpha \ast f\|_2^2,
\end{align*}
where $\alpha \ast f$ is the standard convolution of functions on groups. Note that $\alpha \in C_c(G)$ and $f \in \ell^1(G)$, hence it follows from Young's inequality that
\[
\|\alpha \ast f\|_2 \leq \|\alpha\|_2 \cdot \|f\|_1.
\]
Therefore using the polarization identity, $T_f$ can be extended to a bounded linear operator on $\ell^2(G)$, still denoted by $T_f$. It is obvious that $T_f$ is symmetric, hence we obtain:
\[
\|T_f\|=\sup\big\{|\langle T_f \alpha, \alpha \rangle|: \alpha \in \ell^2(G) \mbox{~with~} \|\alpha\|_2=1\big\} \leq \|f\|_1^2.
\]

Since $G$ is amenable, we can choose a sequence of F{\o}lner sets $\{F_n\}_{n\in \NN}$, \emph{i.e.}, each $F_n$ is a finite subset in $G$ such that $F_n \subset F_{n+1}$, $\bigcup_{n\in \NN} F_n = G$ and
\[
\frac{\sharp \big((F_n\cdot g) \Delta F_n\big)}{ \sharp F_n} \to 0 \mbox{~for~each~} g \in G.
\]
It follows that for any $\varepsilon>0$, there exists $m\in \NN$ such that $\sum_{x\in F_m} f(x) \geq \|f\|_1 - \varepsilon$. For the finite subset $F_m$, the F{\o}lner condition above provides an $n\in \NN$ such that
\[
\frac{\sharp (F_n \setminus F_n g)}{ \sharp F_n} \leq \frac{\varepsilon}{\sharp F_m}
\]
for each $g\in F_m$. Set $F_{n,m}:=\{x\in G: F_m \subseteq F_n^{-1} x\} = \bigcap_{g\in F_m} F_n  g$. Then we have
\[
\frac{\sharp (F_n \setminus F_{n,m})}{\sharp F_n} = \frac{\sharp \left( \bigcup_{g\in F_m} (F_n \setminus F_n g) \right)}{\sharp F_n}  \leq \frac{\sum_{g \in F_m} \sharp (F_n \setminus F_n g)}{\sharp F_n} \leq \varepsilon,
\]
which implies that
\[
\frac{\sharp F_{n,m}}{\sharp F_n} \geq \frac{\sharp (F_{n,m}\cap F_n)}{\sharp F_n} = 1- \frac{\sharp (F_n \setminus F_{n,m})}{\sharp F_n} \geq 1- \varepsilon.
\]
Now we have:
\begin{align*}
\|T_f\| &\geq \left\|\frac{1}{(\sharp F_n)^{1/2}}\chi_{F_n}\ast f\right\|_2^2 = \frac{1}{\sharp F_n}\sum_{x \in G} \left| \sum_{y\in G} \chi_{F_n}(xy^{-1}) f(y) \right|^2 = \frac{1}{\sharp F_n}\sum_{x \in G} \left| \sum_{y\in F_n^{-1} x} f(y) \right|^2\\
& \geq \frac{1}{\sharp F_n}\sum_{x \in F_{n,m}} \left| \sum_{y\in F_n^{-1} x} f(y) \right|^2 \geq \frac{\sharp F_{n,m}}{\sharp F_n} \cdot (\|f\|_1 - \varepsilon)^2 \geq (1-\varepsilon) \cdot (\|f\|_1 - \varepsilon)^2.
\end{align*}
Letting $\varepsilon \to 0$, we conclude that the operator norm of $T_f$ coincides with $\|f\|_1^2$.
\end{proof}

Thanks to Proposition \ref{prop:op norm and l1 norm}, we introduce the following notion:

\begin{defn}\label{defn:associated ops}
For a finitely generated group $G$ and a function $f \in \ell^1(G)$ with range in $[0,1]$, the \emph{associated operator of $f$} is defined to be $T_f:=T_{K_f}$, which is a bounded linear operator on $\ell^2(G)$ with operator norm $\|f\|_1^2$.
\end{defn}

Consequently, combining Theorem \ref{thm:main result general case} with Lemma \ref{lem:thm 4.4.4 in NY12} and Proposition \ref{prop:op norm and l1 norm}, we obtain the following:

\begin{thm}\label{thm:main result amenability}
Let $\{G_i\}_{i\in I}$ be a family of finitely generated amenable groups with finite generating set $S_i$ and the induced word length metric for each $i$. Consider the following conditions:
\begin{enumerate}
 \item For any $\varepsilon>0$, there exists $M>0$ and a family of functions $\{f_i\}_{i\in I}$ such that each $f_i \in \ell^2(G_i)_{1,+} \cap \ell^1(G_i)$ satisfies $\|f_i-s\cdot f_i\|_2\leq \varepsilon$ for $s\in S_i$ and $\|f_i\|_1 \leq M$.
 \item For any $\varepsilon>0$, there exists $M'>0$ and a family of functions $\{f_i\}_{i\in I}$ such that each $f_i \in \ell^2(G_i)_{1,+} \cap \ell^1(G_i)$ satisfies $\|f_i-s\cdot f_i\|_2\leq \varepsilon$ for $s\in S_i$ and the operator norm of the associated operator $T_{f_i}$ in Definition \ref{defn:associated ops} is bounded by $M'$.
 \item For any $\varepsilon>0$, there exists $N>0$ and a family of functions $\{f_i\}_{i\in I}$ such that each $f_i \in \ell^2(G_i)_{1,+}$ satisfies $\|f_i-s\cdot f_i\|_2\leq \varepsilon$ for $s\in S_i$ and $\sharp \supp(f_i) \leq N$.
 \item The family $\{(G_i,S_i)\}_{i\in I}$ is uniformly amenable.
 \item The family $\{(G_i,S_i)\}_{i\in I}$ has uniform Property A.
\end{enumerate}
Then (1) $\Leftrightarrow$ (2) $\Rightarrow$ (3) $\Rightarrow$ (4) $\Leftrightarrow$ (5). Additionally, if there exists $L>0$ such that $\sharp S_i \leq L$ for each $i \in I$, then all the conditions above are equivalent.
\end{thm}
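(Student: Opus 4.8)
The plan is to assemble the five equivalences from the three ingredients already established: Proposition \ref{prop:op norm and l1 norm}, Theorem \ref{thm:main result general case}, and Lemma \ref{lem:thm 4.4.4 in NY12}. The crucial preliminary observation is that the conditions labelled (1), (3), (4) here are \emph{verbatim} conditions (1), (2), (3) of Theorem \ref{thm:main result general case}. Consequently the implications (1) $\Rightarrow$ (3) $\Rightarrow$ (4) are free, and so is the converse implication (4) $\Rightarrow$ (1) once we impose $\sharp S_i \leq L$, since under that hypothesis Theorem \ref{thm:main result general case} makes all three of its conditions equivalent.

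For (1) $\Leftrightarrow$ (2), I would simply invoke Proposition \ref{prop:op norm and l1 norm}. Since each $G_i$ is amenable and each $f_i \in \ell^2(G_i)_{1,+} \cap \ell^1(G_i)$ automatically has range in $[0,1]$, the associated operator satisfies $\|T_{f_i}\| = \|f_i\|_1^2$. Hence the bound $\|f_i\|_1 \leq M$ is literally equivalent to $\|T_{f_i}\| \leq M^2$, so condition (1) with constant $M$ and condition (2) with constant $M' = M^2$ describe the same families (the converse direction taking $M = \sqrt{M'}$). Combined with the previous paragraph, this already gives (2) $\Leftrightarrow$ (1) $\Rightarrow$ (3) $\Rightarrow$ (4).

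The only genuinely new work is (4) $\Leftrightarrow$ (5). Here I would use Lemma \ref{lem:thm 4.4.4 in NY12}, whose whole point is that it converts a F{\o}lner-type function into a Property A kernel, and back, \emph{without altering the support radius $D$ or the parameter $\varepsilon$}. For (4) $\Rightarrow$ (5): given $\varepsilon, R>0$, apply uniform amenability (Definition \ref{defn: uniformly amenable}) with parameter $\varepsilon/R$ to obtain a single $D$ and functions $f_i$; feeding each $f_i$ through Lemma \ref{lem:thm 4.4.4 in NY12} (legitimate since every $G_i$ is amenable) yields maps $\xi_i$ with $(\varepsilon/R,1)$-variation and the \emph{same} support radius $D$; a telescoping estimate along a geodesic of length at most $R$ then upgrades this to $(\varepsilon,R)$-variation with $D$ unchanged, which is exactly uniform Property A. For (5) $\Rightarrow$ (4): run uniform Property A with $R=1$ and apply the reverse direction of Lemma \ref{lem:thm 4.4.4 in NY12} to each $\xi_i$, recovering the $f_i$ of Definition \ref{defn: uniformly amenable} with the same uniform $D$.

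Finally, under $\sharp S_i \leq L$ the implication (4) $\Rightarrow$ (1) noted above closes the cycle (1) $\Leftrightarrow$ (2) $\Rightarrow$ (3) $\Rightarrow$ (4) $\Leftrightarrow$ (5), rendering all five conditions equivalent. The main point to watch is the uniformity bookkeeping in (4) $\Leftrightarrow$ (5): I must verify that Lemma \ref{lem:thm 4.4.4 in NY12} is genuinely parameter-preserving, so that one $D$ serves the entire family at once, and that the passage from $(\varepsilon/R,1)$- to $(\varepsilon,R)$-variation changes only $\varepsilon$ and not the support radius. Both hold because the lemma operates group-by-group at fixed $(\varepsilon,D)$ and the geodesic estimate only rescales the variation parameter.
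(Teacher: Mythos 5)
Your proposal is correct and follows essentially the same route as the paper, which proves this theorem precisely by combining Theorem \ref{thm:main result general case} (whose conditions (1), (2), (3) are verbatim your (1), (3), (4)), Proposition \ref{prop:op norm and l1 norm} for (1) $\Leftrightarrow$ (2) via $\|T_{f_i}\| = \|f_i\|_1^2$, and the parameter-preserving Lemma \ref{lem:thm 4.4.4 in NY12} applied group-by-group for (4) $\Leftrightarrow$ (5). Your explicit bookkeeping — noting that $f_i \in \ell^2(G_i)_{1,+}$ forces range in $[0,1]$, and that the telescoping along geodesics upgrades $(\varepsilon/R,1)$-variation to $(\varepsilon,R)$-variation without changing the support radius — correctly fills in the details the paper leaves implicit.
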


In the case of a sequence of finite groups, Theorem \ref{thm:main result amenability} can be simplified as follows:

\begin{cor}\label{cor:main result amenability}
Let $\{G_n\}_{n\in \NN}$ be a sequence of finite groups with finite generating set $S_n$ and the induced word length metric for each $n \in \NN$, and $X$ be their coarse disjoint union. If there exists $L>0$ such that $\sharp S_n \leq L$ for each $n\in \NN$, then the following are equivalent:
\begin{enumerate}
 \item For any $\varepsilon>0$, there exists $M>0$ and a family of functions $\{f_n\}_{n\in \NN}$ such that each $f_n\in \ell^2(G_n)_{1,+}$ satisfies $\|f_n-s\cdot f_n\|_2\leq \varepsilon$ for $s\in S_n$ and $\|f_n\|_1 \leq M$.
 \item For any $\varepsilon>0$, there exists $M'>0$ and a family of functions $\{f_n\}_{n\in \NN}$ such that each $f_n \in \ell^2(G_n)_{1,+}$ satisfies $\|f_n-s\cdot f_n\|_2\leq \varepsilon$ for $s\in S_n$ and the operator norm of the associated operator $T_{f_n}$ is bounded by $M'$.
 \item For any $\varepsilon>0$, there exists $N>0$ and a family of functions $\{f_n\}_{n\in \NN}$ such that each $f_n \in \ell^2(G_n)_{1,+}$ satisfies $\|f_n-s\cdot f_n\|_2\leq \varepsilon$ for $s\in S_n$ and $\sharp \supp(f_n) \leq N$.
 \item The sequence $\{(G_n,S_n)\}_{n\in \NN}$ is uniformly amenable.
 \item The space $X$ has Property A.
\end{enumerate}
\end{cor}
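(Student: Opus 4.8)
The plan is to derive the corollary from Theorem \ref{thm:main result amenability} together with the standard fact that Property A of a coarse disjoint union of finite spaces is equivalent to uniform Property A of the pieces. First I would note that since each $G_n$ is finite it is automatically amenable, and every function on $G_n$ lies in $\ell^1(G_n)=\ell^1(G_n)\cap\ell^2(G_n)$; thus conditions (1)--(4) of the corollary are exactly conditions (1)--(4) of Theorem \ref{thm:main result amenability} specialised to the sequence $\{(G_n,S_n)\}_n$. As we assume $\sharp S_n\le L$ uniformly, that theorem already gives the equivalence of (1)--(4) with each other and with the assertion that $\{G_n\}_n$ has uniform Property A (Definition \ref{defn: uniformly property A}). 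The same hypothesis $\sharp S_n\le L$ guarantees that $X$ has bounded geometry, as remarked after Proposition \ref{prop: uniform Hul-Rei condition}. It therefore remains to establish the single equivalence
\[
\{G_n\}_n \text{ has uniform Property A} \iff X=\bigsqcup_n G_n \text{ has Property A}.
\]

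For the implication ``$\Leftarrow$'', I would apply the Higson-Roe characterisation (Proposition \ref{prop: Hig-Roe condition}) to $X$: given $\varepsilon,R>0$, a map $\xi\colon X\to\ell^2(X)_{1,+}$ with $(\varepsilon,R)$-variation and support radius $S$ produces pieces $\xi|_{G_n}$. For indices $n>S$ the ball $B(x,S)$ about any $x\in G_n$ cannot meet another piece, because $d(G_n,G_m)\ge n+m\ge n>S$ for $m\neq n$; hence $\xi(x)\in\ell^2(G_n)_{1,+}$ and restriction yields a valid map on $G_n$. For the finitely many indices $n\le S$, each $G_n$ being finite I would simply replace the map by the constant $x\mapsto(\sharp G_n)^{-1/2}\chi_{G_n}$, which has zero variation and support radius $\diam(G_n)$. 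Taking the common support radius $\max\{S,\max_{n\le S}\diam(G_n)\}$ then exhibits uniform Property A of $\{G_n\}_n$.

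For the implication ``$\Rightarrow$'', I would glue uniform Higson-Roe maps into a single one on $X$. Given $(\varepsilon,R)$ and uniform maps $\xi_n\colon G_n\to\ell^2(G_n)_{1,+}$ with common support radius $D$, I would set $\xi(x):=\xi_n(x)$ for $x\in G_n$ via the isometric inclusion $\ell^2(G_n)\hookrightarrow\ell^2(X)$. The only danger is a pair $x\in G_n,\ y\in G_m$ with $n\neq m$ and $d(x,y)\le R$; but the defining inequality $d(G_n,G_m)\ge n+m+\diam(G_n)+\diam(G_m)$ forces $n+m\le R$, so all cross-piece interactions at scale $R$ involve only the finitely many indices $n\le R$. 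On the finite subspace $Y:=\bigsqcup_{n\le R}G_n$ I would override $\xi$ by the constant map $x\mapsto(\sharp Y)^{-1/2}\chi_Y$ (zero variation, support radius $\diam(Y)$), keeping $\xi_n$ for $n>R$. A short case analysis shows that every pair with $d(x,y)\le R$ lies either in a single $G_n$ with $n>R$ or inside $Y$, so the glued map has $(\varepsilon,R)$-variation and support radius $\max\{D,\diam(Y)\}$, giving Property A of $X$.

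The main obstacle in both directions is the same piece of bookkeeping: for a fixed scale the low-index pieces are not yet pushed far enough apart to be decoupled, so the naive piecewise map fails the variation bound there. The resolution, and the only place finiteness of the $G_n$ is really used, is that such a low-index piece (or finite union of them) admits a zero-variation constant function of finite support, which can harmlessly replace $\xi$ on that region. Combining this equivalence with the chain from Theorem \ref{thm:main result amenability} yields the equivalence of all five conditions.
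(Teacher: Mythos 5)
Your proposal is correct and follows essentially the same route the paper intends: the corollary is stated as a direct specialisation of Theorem \ref{thm:main result amenability} (finiteness of each $G_n$ gives amenability and $\ell^1(G_n)=\ell^2(G_n)$), combined with the standard equivalence between uniform Property A of the sequence and Property A of the coarse disjoint union. The only difference is that you write out in full the folklore restriction/gluing argument for that last equivalence (handling the finitely many low-index pieces by constant functions), which the paper leaves implicit; your treatment of it is accurate.
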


\section{Questions}\label{sec:ques}

Recall that our main results (Theorem \ref{thm:main result general case} and \ref{thm:main result amenability}) indicate that the uniform Hulanicki-Reiter condition is indeed equivalent to some formally weaker versions. One might ask whether the same phenomenon happens for the Higson-Roe condition. More precisely:

\begin{question}\label{ques: hig-roe}
Let $\{G_i\}_{i\in I}$ be a family of finitely generated groups with finite generating set $S_i$ for each $i$. Assume that for any $\varepsilon>0$, there exist $M>0$ and a family of maps $\{\xi_i: G_i \to \ell^2(G_i)_{1,+}\cap \ell^1(G_i)\}_{i\in I}$ such that each $\xi_i$ has $(\varepsilon,1)$-variation and $\|\xi_i(x)\|_1 \leq M$ for $x\in G_i$. Does the family $\{(G_i,S_i)\}_{i\in I}$ have uniformly Property A?
\end{question}

It is easy to construct a counterexample and provide a negative answer:

\begin{ex}\label{eg:ceg to hig-roe}
Let $\mathbb{F}_2$ be the free group generated by $S=\{a,b\}$, and $\{1\}=N_0 \triangleleft N_1 \triangleleft N_2 \triangleleft \cdots$ be a sequence of normal subgroups in $\mathbb{F}_2$ with finite index and trivial intersection. Take $G_n:=\mathbb{F}_2/N_n$ and $S_n=\{aN_n, bN_n\}$ for each $n\in \NN$. Since $\mathbb{F}_2$ is non-amenable, the sequence $\{(G_n, S_n)\}_n$ does not have uniform Property A. However, taking $\xi_n(g):=\delta_{N_n} \in \ell^2(G_n) \cap \ell^1(G_n)$ for each $n \in \NN$ and $g\in G_n$, we obtain that each $\xi_n$ has $(0,1)$-variation and $\|\xi_n(g)\|_1=1$ for any $g\in G_n$. Hence we obtain a negative answer to Question \ref{ques: hig-roe}.
\end{ex}

In fact, Question \ref{ques: hig-roe} has a negative answer even for finite $I$. One can just take a group without Property A and choose a function $\xi$ as in Example \ref{eg:ceg to hig-roe}.

We realise that the obstruction to generalise the Higson-Roe condition is due to the lack of equivariance for the Higson-Roe functions, which allows too much flexibility for the choice. Hence in order to obtain Property A, we need some extra requirement on their supports. Currently we do not have any appropriate candidate, therefore we would like to pose the following question:

\begin{question}
Let $G$ be a finitely generated group. Is it possible to weaken the requirement on the support of the Higson-Roe functions as in Theorem \ref{thm:main result general case} or \ref{thm:main result amenability} while still guaranteeing that $G$ has Property A?
\end{question}

\bibliographystyle{plain}
\bibliography{bib_Property_A}

\end{document}